\documentclass[leqno]{article}

% !TEX root = ../main.tex

%%%%%%%%%%%%%%%%%%%%%%%%%%%%%%%%
%			PACKAGES
%%%%%%%%%%%%%%%%%%%%%%%%%%%%%%%%

\usepackage{amsmath}
\usepackage{amsthm}
\usepackage{amssymb}
% \usepackage{tikz}
% \usetikzlibrary{shapes.geometric, arrows, calc, positioning, shapes.misc}
\usepackage{amsfonts}
\usepackage{hyperref}
\usepackage{multirow}
\usepackage{booktabs}
% \usepackage{algorithm}

% For inline if x then y
% \usepackage[noend]{algpseudocode}

% For colored code
\usepackage{color}
\usepackage{listings}

% For margin commenting. Erase this before final submission.
\usepackage{marginnote}

\usepackage{pgfplots}
 % !TEX root = ../main.tex

\author{Francisco Jos\'e Vial Prado \thanks{DCC,  Pontificia
Universidad Cat\'olica de Chile (\texttt{fovial@uc.cl}).}}
\date{}
% \institute{Pontificia Universidad Católica de Chile and IMFD Chile}
% \author{\small Louis Goubin\inst{1} \and
% Cyril Hugounenq\inst{2} \and
% Mart\'{i}n Ugarte\inst{3, 4} \and
% Francisco Vial-Prado\inst{3, 4}
% }
%
% \institute{\footnotesize 
% Laboratoire de Math\'{e}matiques de Versailles, UVSQ, Universit\'{e} Paris-Saclay, France 
% \and 
% Institut Fourier, Univ. Grenoble Alpes, France
% \and
% DCC, Pontificia Universidad Cat\'{o}lica de Chile, Santiago de Chile
% \and
% Instituto Milenio Fundamentos de los Datos, Chile %\url{imfd.cl}
% }
%
 % !TEX root = ../main.tex

%Equations
\numberwithin{equation}{section}
%NOTES

\newcommand{\RR}{\mathbb{R}}      % for Real numbers
      % for Integers
      % for Naturals
      % for Rationals

\newcommand{\ep}{\varepsilon}
\newcommand{\dist}{\mbox{dist}}
\newcommand{\EE}{\mathcal{E}}      % for Naturals
\newcommand{\AAA}{\mathcal{A}}      % for Naturals

\newtheorem{definition}{Definition}
\newtheorem{proposition}{Proposition}
\newtheorem{theorem}{Theorem}
\newtheorem{lemma}{Lemma}

\newcommand{\ie}{i.e.$\!$ }

\begin{document}

\title{The Gelfand Problem in Tubular Domains}

\maketitle

\abstract{
    We construct stable solutions of $\Delta u + \lambda e^u=0$ with
    Dirichlet boundary conditions in small tubular domains (i.e. geodesic
    $\ep$--neighbourhoods of a curve $\Lambda$ embedded in $\RR^n$), adapting
    the arguments of Pacard-Pacella-Sciunzi. We also show unicity of these
    solutions, in particular, we show that the stable branch of the bifurcation
    diagram is similar to the well-known nose-shaped diagram of the standard
    Gelfand problem in the unit ball. In this work, $\Lambda$ can be replaced
    by any compact smooth manifold embedded in $\RR^n$.
}

    % We construct multiple solutions of $\Delta u + \lambda e^u=0$ with
    % Dirichlet boundary conditions in small tubular domains (i.e. geodesic
    % $\ep$--neighbourhoods of a curve $\Lambda$ embedded in $\RR^n$), adapting
    % arguments of Pacard-Pacella-Sciunzi.  Moreover, in dimension less than 10
    % and $\ep$ outside a resonant set accumulating around 0, we construct
    % unstable solutions, proving that the bifurcation diagram of this problem is
    % similar to the nose-shaped diagram of the standard Gelfand problem in the
    % unit ball, for every dimension. In this work, $\Lambda$ can be replaced
    % by any compact smooth manifold embedded in $\RR^n$.

\smallskip

% \textbf{Keywords:} \textit{lattice-based cryptography, fully homomorphic
% encryption, key-recovery attacks}

% !TeX root = ../main.tex

\section{Introduction}
Let $\Omega$ be a domain of $\RR^n$, and for $\lambda>0$ consider the Gelfand 
problem
\begin{equation}
    \label{eq-g-lambda}
(G_\lambda)\;\left\{ 
    \begin{array}{cl} \Delta u + \lambda e^u=0 & \mbox{ in }\Omega,\\
        u=0 & \mbox{ on }\partial \Omega.
    \end{array}\right.
\end{equation}
Also, let $\Lambda$ be a smooth closed curve embedded in $\RR^n$, and given
$\ep>0$ we define the tubular neighbourhood of radius $\ep$ centered about
$\Lambda$ by
\begin{equation}
    \label{eq-tube}
    T_\ep(\Lambda):= \{x\in \RR^n: \dist(x,\Lambda)<\ep\}.
\end{equation}

In this article, we study the Gelfand problem $(G_\lambda)$ in this kind of
tubular domains. Let us first give some properties of this problem
in all generality, assuming that $\Omega$ is
bounded and connected.

\begin{itemize}
    \item[(i)] If $u$ solves $(G_\lambda)$, $\lambda>0$ if and only if $u>0$.
        This is a straightforward consequence of the maximum principle.
    \item[(ii)] If $(G_\lambda)$ admits a solution $u$, then
        $\lambda \leq \lambda_1(\Omega)$, where $\lambda_1(\Omega)$
        stands for the first eigenvalue of the Dirichlet Laplacian in $\Omega$.
        Indeed, let $\phi>0$ be a principal eigenfunction of the Laplacian in
        $\Omega$ with Dirichlet boundary conditions, and multiply $(G_\lambda)$ by
        $\phi$. Integrating by parts and using $u<e^u$ gives
        $(-\lambda_1(\Omega)+\lambda)\int_\Omega \phi\,u < 0.$
    \item[(iii)] With implicit function methods, one can establish a local
        solution curve $(\lambda, u)\in [0,\infty)\times C(\bar\Omega),$ which
        emanates from the stable solution $\lambda=0,u=0$. Because of (ii) this
        curve is contained in $[0,\lambda_1(\Omega)]\times C(\bar\Omega)$.
    % \item[(iv)] Fix $\lambda$ such that $(G_\lambda)$ admits non-trivial
    %     solutions and let $u_\lambda$ verify $(G_\lambda)$. The mapping
    %     $u_\lambda\mapsto ||u_\lambda||_{L^\infty}$ is injective. {\color{red}
    %     why?}
\end{itemize}

We can therefore identify $(\lambda,u)\in [0,\lambda_1]\times C^2(\bar\Omega)$
with a pair $(\lambda, ||u||_{L^\infty})\in[0,\lambda_1]\times \RR^+$, and draw
the bifurcation diagram in the plane. However, determining multiplicity of these
solutions require an understanding in the general domain $\Omega$. Nevertheless,
theorem 4.6 of \cite{isoperimetric-inequalities} implies that the spectrum of
$(G_\lambda)$ (\ie the values for which $(G_\lambda)$ is solvable) is an
interval $[0,\lambda_\ast(\Omega))$, the value $\lambda_\ast$ depending heavily
on the geometry of $\Omega$.

\section{Preliminaries}
\label{sec-preliminaries}

% !TeX root = ../main.tex

\subsection{The stable branch}
\label{sec:stable-branch}

Let $L(u):=-\Delta u - \lambda e ^ u$. We say that a solution $u$ of
$(G_\lambda)$ is \textit{stable} if the linearized operator of $L$ at the point
$u$ is positive definite. This notion of stability comes from the following
argument: For $\Omega$ consider the functional $\EE:C_0^2(\Omega)\to\RR$
defined by
\begin{equation}
    \EE(u):= \frac{1}{2}\int_\Omega |\nabla u|^2\, dx - \int_\Omega e^u\, dx.
\end{equation}
We say that $u$ is a critical point of $\EE$ if for every $\phi\in
C_0^2(\Omega)$, $0$ is a critical point of $E:\RR\to \RR$ given by
$E(t):=\EE(u+t\phi)$. Indeed, the equation $E'(0)=0$ gives
\begin{equation}
    \int_\Omega (-\Delta u-e^u)\,\phi \, dx = 0\; \mbox{ and }\;
    E''(0)=\int_\Omega|\nabla u|^2 - \int_\Omega e^u\phi^2.
\end{equation}

It is hence natural to define stability as follows.

\begin{definition} Let $\Omega$ be an open set of $\RR^n$, and $u\in
    C_0^2(\Omega)$ a solution of $-\Delta u = f(u)$. We say that $u$ is stable
    if
    $$
    Q_u(\phi):=\int_\Omega|\nabla \phi|^2\;dx - \int_\Omega f'(u)\,\phi^2\;dx
    \geq 0
    $$
    for all $\phi\in C_c^1(\Omega)$ (or $\phi\in H_0^1(\Omega)$ if $\Omega$ is
    bounded).
\end{definition}

For the sake of completeness, let us give some properties of stable solutions.

\begin{proposition} Local minimisers of the energy $\EE$ are stable.
\end{proposition}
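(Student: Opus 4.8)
The plan is to reduce stability to an elementary one‑variable calculus fact applied to the restriction of $\EE$ to affine lines through $u$ (I take $\lambda=1$, as in the critical point computation already carried out above; the general $\lambda$ is identical). Fix a test function $\phi$ — take $\phi\in C_c^\infty(\Omega)$ first, the general case $\phi\in C_c^1(\Omega)$, resp.\ $\phi\in H_0^1(\Omega)$, being recovered at the end by density — and set $E(t):=\EE(u+t\phi)$ for $t\in\RR$. Expanding,
\[
E(t)=\tfrac12\int_\Omega|\nabla u|^2\,dx+t\int_\Omega\nabla u\cdot\nabla\phi\,dx+\tfrac{t^2}{2}\int_\Omega|\nabla\phi|^2\,dx-\int_\Omega e^{u+t\phi}\,dx,
\]
so the Dirichlet part is a quadratic polynomial in $t$ while the nonlinear part is smooth in $t$: since $\phi$ is supported in a compact set $K\subset\Omega$ on which $u$ and $\phi$ are bounded, one may differentiate $\int_K e^{u+t\phi}\,dx$ under the integral sign, the integrand and all its $t$‑derivatives being dominated uniformly for $t$ in bounded intervals by an integrable function on $K$. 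Hence $E\in C^2(\RR)$ and in particular $E''(0)=\int_\Omega|\nabla\phi|^2\,dx-\int_\Omega e^{u}\,\phi^2\,dx=Q_u(\phi)$.

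Next I would invoke local minimality. By hypothesis there is $\delta>0$ with $\EE(u)\le\EE(v)$ for every admissible $v$ satisfying $\|v-u\|<\delta$, the norm being the one ($C_0^2$ or $H_0^1$) in which the minimisation is understood. For the curve $v_t:=u+t\phi$ we have $\|v_t-u\|=|t|\,\|\phi\|<\delta$ as soon as $|t|<\delta/\|\phi\|$, so $t=0$ is a local minimum of the scalar $C^2$ function $E$. Elementary calculus gives $E'(0)=0$ and $E''(0)\ge 0$. The first identity merely re‑expresses that $u$ is a weak solution of $-\Delta u=e^u$ (matching the computation in the text), while the second reads $Q_u(\phi)\ge 0$.

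Finally, since $\phi\in C_c^\infty(\Omega)$ was arbitrary, $Q_u(\phi)\ge 0$ holds on a dense subspace of the admissible test functions, and I would extend it by continuity of $\phi\mapsto Q_u(\phi)$: for a sequence $\phi_j\to\phi$ in $C_c^1$ (which may be taken supported in a common compact set) one has $\int_\Omega|\nabla\phi_j|^2\to\int_\Omega|\nabla\phi|^2$ and $\int_\Omega e^u\phi_j^2\to\int_\Omega e^u\phi^2$ by dominated convergence; when $\Omega$ is bounded the same passage to the limit works along $C_c^\infty\ni\phi_j\to\phi$ in $H_0^1(\Omega)$, using $e^u\in L^\infty(\Omega)$ and $\phi_j^2\to\phi^2$ in $L^1(\Omega)$. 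This yields $Q_u(\phi)\ge0$ for all $\phi\in C_c^1(\Omega)$ (or $\phi\in H_0^1(\Omega)$ if $\Omega$ is bounded), which is exactly the definition of stability.

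I do not anticipate a genuine obstacle: the only points needing a line of justification are the $C^2$‑dependence of $E$ on $t$ (differentiation under the integral sign on the compact support of $\phi$) and the precise meaning of \emph{local minimiser}, i.e.\ in which topology. The argument above works verbatim for either the $C_0^2$ or the $H_0^1$ topology, the density step being invoked only to reach the full class of test functions appearing in the definition.
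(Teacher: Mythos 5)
Your proof is correct and is precisely the argument the paper intends: the paper itself does not write out a proof (it defers to Dupaigne, \S 1), but its motivating computation of $E(t)=\EE(u+t\phi)$ and $E''(0)$ in Section~2.1 is exactly your second-variation step, and local minimality giving $E''(0)=Q_u(\phi)\geq 0$ plus density is the standard completion. As a side note, you correctly obtain $E''(0)=\int_\Omega|\nabla\phi|^2-\int_\Omega e^u\phi^2$, whereas the paper's displayed formula contains a typo ($|\nabla u|^2$ in place of $|\nabla\phi|^2$).
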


\begin{proposition} A $C_0^2(\Omega)$ solution of $-\Delta u = f(u)$ is stable
    if and only if $\lambda_1(-\Delta u-f'(u),\omega)\geq 0$ for every bounded
    subdomain $w$ of $\Omega$ (or simply, $\omega=\Omega$ if $\Omega$ is
    bounded).
\end{proposition}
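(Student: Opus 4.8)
The plan is to reduce the statement to the variational (Courant--Fischer) characterisation of the principal Dirichlet eigenvalue, after which the two conditions become literally the same infimum condition on the quadratic form $Q_u$. Write $Q_u^\omega(\phi):=\int_\omega|\nabla\phi|^2\,dx-\int_\omega f'(u)\phi^2\,dx$ for a bounded subdomain $\omega\subseteq\Omega$, so that $Q_u=Q_u^\Omega$ when $\Omega$ is bounded. Since $u\in C_0^2(\Omega)$, the coefficient $f'(u)$ is continuous on $\Omega$, hence bounded on every $\omega$ whose closure is a compact subset of $\Omega$; for such $\omega$ the compactness of the embedding $H_0^1(\omega)\hookrightarrow L^2(\omega)$ gives
\[
\lambda_1\bigl(-\Delta-f'(u),\omega\bigr)=\inf_{0\neq\phi\in H_0^1(\omega)}\frac{Q_u^\omega(\phi)}{\int_\omega\phi^2\,dx}.
\]
I will first establish the equivalence for subdomains $\omega$ with $\bar\omega$ a compact subset of $\Omega$, and then remove this extra hypothesis at the end.

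For the implication ``$\lambda_1\ge0$ on all subdomains $\Rightarrow$ $u$ stable'': given $\phi\in C_c^1(\Omega)$, its support is a compact subset of the open set $\Omega$, so one may pick a bounded subdomain $\omega$ with $\operatorname{supp}\phi\subset\omega$ and $\bar\omega$ compact in $\Omega$. Extending $\phi$ by zero, $\phi\in H_0^1(\omega)$ and the displayed formula gives $Q_u(\phi)=Q_u^\omega(\phi)\ge\lambda_1(-\Delta-f'(u),\omega)\int_\omega\phi^2\,dx\ge0$. As $C_c^1(\Omega)$ is dense in $H_0^1(\Omega)$, when $\Omega$ is bounded this also yields $Q_u\ge0$ on all of $H_0^1(\Omega)$; in either case $u$ is stable.

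For the converse ``$u$ stable $\Rightarrow$ $\lambda_1\ge0$ on all subdomains'': fix $\omega$ with $\bar\omega$ compact in $\Omega$. Every $\phi\in C_c^1(\omega)$, extended by zero, lies in $C_c^1(\Omega)$, so stability gives $Q_u^\omega(\phi)=Q_u(\phi)\ge0$. Because $f'(u)\in L^\infty(\omega)$, the form $Q_u^\omega$ is continuous on $H_0^1(\omega)$, and $C_c^1(\omega)$ is dense there, so $Q_u^\omega\ge0$ on $H_0^1(\omega)$; dividing by $\int_\omega\phi^2\,dx$ and taking the infimum gives $\lambda_1(-\Delta-f'(u),\omega)\ge0$. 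If $\Omega$ is bounded one simply takes $\omega=\Omega$, the definition of stability already quantifying over $H_0^1(\Omega)$.

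The only genuinely delicate point is extending the converse to an arbitrary bounded subdomain $\omega$ whose closure may meet $\partial\Omega$, where $f'(u)$ need only be locally bounded and the eigenvalue must be read as the generalised (Rayleigh-quotient) principal eigenvalue. This is handled by the monotonicity $\lambda_1(-\Delta-f'(u),\omega')\ge\lambda_1(-\Delta-f'(u),\omega)$ for $\omega'\subseteq\omega$ together with an interior exhaustion $\omega_k\uparrow\omega$ with $\bar\omega_k$ compact in $\Omega$: since $\bigcup_k C_c^1(\omega_k)$ is dense in $H_0^1(\omega)$ one has $\lambda_1(-\Delta-f'(u),\omega)=\lim_k\lambda_1(-\Delta-f'(u),\omega_k)$, and each term on the right is $\ge0$ by the previous paragraph. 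Everything else is the standard min--max machinery; I expect no further obstacle.
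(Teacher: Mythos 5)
Your argument is correct, and it is the standard variational one; note that the paper itself states this proposition without proof, deferring to \cite{stable-solutions-elliptic}, \S 1, so there is no in-text proof to compare against. Both directions are handled as one would expect: localising a compactly supported test function into a compactly contained subdomain for one implication, and restricting the stability inequality to $C_c^1(\omega)$ and using the Rayleigh-quotient characterisation of $\lambda_1$ for the other. Two small remarks on the write-up: the compactness of $H_0^1(\omega)\hookrightarrow L^2(\omega)$ is needed only to produce a minimiser, not for the infimum formula you actually use; and your final exhaustion step becomes unnecessary if you take the generalised principal eigenvalue to be \emph{defined} as the infimum of the Rayleigh quotient over $C_c^\infty(\omega)$ (as in the cited reference), since every such test function already lies in some $\omega_k$ --- moreover, because $u\in C_0^2(\Omega)$ is bounded, $f'(u)$ is in fact globally bounded and the continuity of $Q_u^\omega$ on $H_0^1(\omega)$ is never in doubt.
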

\begin{proposition} A $C_0^2(\Omega)$ solution of $-\Delta u = f(u)$ is stable
    if and only if it exists $v\in C^2(\Omega)$, $v>0$, and $-\Delta v -f'(u)\,
    v\geq 0.$
\end{proposition}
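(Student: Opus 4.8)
The plan is to prove the two implications separately: the existence of $v$ gives stability by a direct substitution, while the converse follows from an exhaustion-plus-compactness argument based on the characterisation of stability via principal eigenvalues on subdomains (Proposition 2).

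Suppose $v\in C^2(\Omega)$, $v>0$, satisfies $-\Delta v-f'(u)\,v\geq 0$ in $\Omega$. Fix $\phi\in C_c^1(\Omega)$ and set $\psi:=\phi/v\in C_c^1(\Omega)$, which is legitimate since $v$ is $C^1$ and bounded away from $0$ on the compact set $\operatorname{supp}\phi$. Expanding $|\nabla\phi|^2=\psi^2|\nabla v|^2+v\,\nabla v\cdot\nabla(\psi^2)+v^2|\nabla\psi|^2$, integrating over $\Omega$, and integrating by parts in the middle term (no boundary term, as $\psi$ has compact support) yields the ground-state substitution identity
\[
\int_\Omega|\nabla\phi|^2\,dx=\int_\Omega v^2|\nabla\psi|^2\,dx-\int_\Omega v\,(\Delta v)\,\psi^2\,dx .
\]
Since $v>0$ and $-\Delta v\geq f'(u)\,v$ pointwise, we get $-v\,\Delta v\geq f'(u)\,v^2$, hence $-\int_\Omega v(\Delta v)\psi^2\geq\int_\Omega f'(u)(v\psi)^2=\int_\Omega f'(u)\phi^2$, and therefore $Q_u(\phi)=\int_\Omega|\nabla\phi|^2-\int_\Omega f'(u)\phi^2\geq\int_\Omega v^2|\nabla\psi|^2\geq 0$. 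When $\Omega$ is bounded this extends to $\phi\in H_0^1(\Omega)$ by density.

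For the converse, assume $u$ is stable; we may take $\Omega$ connected, arguing on each component otherwise. Pick smooth bounded domains $\omega_1\Subset\omega_2\Subset\cdots$ exhausting $\Omega$, fix $x_0\in\omega_1$, and let $\mu_k$ be the first Dirichlet eigenvalue of $-\Delta-f'(u)$ on $\omega_k$, with a positive eigenfunction $v_k$ normalised by $v_k(x_0)=1$, so that $-\Delta v_k-f'(u)v_k=\mu_k v_k$ with $v_k>0$ in $\omega_k$. By Proposition 2, applied to the bounded subdomains $\omega_k$, stability forces $\mu_k\geq 0$ for all $k$, and the $\mu_k$ decrease by domain monotonicity of the principal eigenvalue, so $\mu_k\downarrow\mu_\infty\geq 0$. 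The decisive step is to pass to the limit: since $f'(u)$ is continuous, hence bounded on compacta, and $\mu_k\in[0,\mu_1]$, the Harnack inequality applied along a chain of balls joining $x_0$ to an arbitrary compact $K\Subset\Omega$ (this uses connectedness of $\Omega$) shows that for $k$ large the $v_k$ are bounded above and below by positive constants depending only on $K$. Interior $W^{2,p}$ (or Schauder) estimates then bound $(v_k)$ in $C^2_{\mathrm{loc}}(\Omega)$, so along a diagonal subsequence $v_k\to v$ in $C^2_{\mathrm{loc}}(\Omega)$, with $v\geq 0$, $v(x_0)=1$, and $-\Delta v-f'(u)v=\mu_\infty v\geq 0$; passing to the limit in the Harnack bounds yields $v>0$ throughout $\Omega$, which is the desired function.

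I expect the compactness step to be the main obstacle: one must produce \emph{uniform, two-sided} bounds for the eigenfunctions $v_k$ on compact subsets of $\Omega$ — the lower bound being the delicate part, since it needs a Harnack chain and the connectedness of $\Omega$ — so as to prevent the limit $v$ from degenerating to $0$. The remaining ingredients (the substitution identity, domain monotonicity of the $\mu_k$, and the elliptic estimates) are routine.
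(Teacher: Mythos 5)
Your proof is correct: the ground-state substitution identity gives the easy direction, and the exhaustion-by-subdomains argument with nonnegative principal eigenvalues, Harnack chains for two-sided bounds, and interior elliptic estimates correctly produces the positive supersolution in the converse. The paper itself offers no proof of this proposition, deferring to \cite{stable-solutions-elliptic}, \S 1; your argument is precisely the standard one found there, so there is nothing to reconcile.
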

\begin{proposition} There is one unique stable solution of $(G_\lambda)$ for
    every admissible $\lambda$, which is a minimiser of the energy and the
    solution with the smallest $L^\infty(\Omega)$ norm.
\end{proposition}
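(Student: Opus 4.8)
The plan is to produce a minimal solution by monotone iteration and then show it coincides with the unique stable solution, reading off the extremal properties along the way. First I would note that $0$ is a subsolution of $(G_\lambda)$ (since $\lambda e^{0}=\lambda>0$) and that any solution $\bar u$ — one exists because $\lambda$ is admissible — is a supersolution. As $t\mapsto\lambda e^{t}$ is nondecreasing, the iteration $-\Delta u_{k+1}=\lambda e^{u_k}$ in $\Omega$, $u_{k+1}=0$ on $\partial\Omega$, started at $u_{0}=0$, produces a nondecreasing sequence with $0\le u_{k}\le\bar u$; elliptic regularity and monotone convergence give a solution $\underline u$. Running the same comparison with an arbitrary solution $u$ in place of $\bar u$ shows $u_{k}\le u$ for all $k$, hence $\underline u\le u$ for \emph{every} solution $u$. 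Since property~(i) makes every solution positive, this already yields $\|\underline u\|_{L^\infty}=\sup_\Omega\underline u\le\sup_\Omega u=\|u\|_{L^\infty}$ for all solutions $u$, i.e.\ $\underline u$ has the smallest $L^\infty$ norm.

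Next I would prove $\underline u$ is stable. Arguing by contradiction, by Proposition~2 I may assume the first Dirichlet eigenvalue $\mu_{1}$ of $-\Delta-\lambda e^{\underline u}$ on $\Omega$ is negative, with positive eigenfunction $\phi_{1}$ (the subdomain in Proposition~2 may be taken to be $\Omega$ itself since $\underline u\in C^{2}(\bar\Omega)$). Using $-\Delta\underline u=\lambda e^{\underline u}$, the eigenvalue equation, and the elementary bound $e^{-s}\le 1-s+\tfrac{s^{2}}{2}$ for $s\ge0$, one checks that $\underline u-t\phi_{1}$ is a strict supersolution for all small $t>0$, and it is nonnegative for such $t$ because $\underline u$ and $\phi_{1}$ vanish linearly at $\partial\Omega$ (Hopf lemma). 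Running the iteration of the first step between $0$ and $\underline u-t\phi_{1}$ then yields a solution $v$ with $0<v\le\underline u-t\phi_1<\underline u$ in $\Omega$, contradicting minimality. Hence $\mu_{1}\ge0$ and $\underline u$ is stable by Proposition~2.

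For uniqueness, let $u$ be any stable solution. By the first step $w:=u-\underline u\ge0$ and $w=0$ on $\partial\Omega$, with $-\Delta w=\lambda(e^{u}-e^{\underline u})$; convexity of $\exp$ gives $e^{u}-e^{\underline u}\le e^{u}(u-\underline u)=e^{u}w$, so $(-\Delta-\lambda e^{u})w\le0$, while stability of $u$ gives $\lambda_{1}(-\Delta-\lambda e^{u};\Omega)\ge0$. If this eigenvalue is positive, the maximum principle forces $w\le0$, hence $w\equiv0$; if it vanishes, testing the inequality against the positive principal eigenfunction and integrating by parts forces $(-\Delta-\lambda e^{u})w\equiv0$, i.e.\ $e^{u}-e^{\underline u}=e^{u}w$ pointwise, which by strict convexity of $\exp$ again forces $w\equiv0$. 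In either case $u=\underline u$.

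Finally, for energy minimality I would evaluate $\EE$ along the segment $t\mapsto\underline u+t(u-\underline u)$ from $\underline u$ to a competing solution $u$: after integrating by parts its derivative is $\lambda\int_{\Omega}(u-\underline u)\bigl[(1-t)e^{\underline u}+te^{u}-e^{(1-t)\underline u+tu}\bigr]\,dx\ge0$ by convexity, so $\EE(\underline u)\le\EE(u)$ for every solution, strictly if $u\ne\underline u$; to promote this to $\underline u$ being a genuine minimiser I would fix a strict supersolution $\bar u\ge\underline u$ (available, since the admissible parameters form an interval, by taking $\bar u=\underline u_{\lambda'}$ with $\lambda'>\lambda$ admissible) and minimise $\EE$ over the closed convex order interval $\{v\in H^{1}_{0}(\Omega):0\le v\le\bar u\}$, where it is coercive, bounded below and weakly lower semicontinuous, so the minimum is attained; the minimiser cannot touch the \emph{strict} sub/supersolutions $0$ and $\bar u$, so it solves $(G_\lambda)$ and hence equals $\underline u$ by the energy comparison just made. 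The genuinely delicate points I expect are the boundary comparison $\underline u\ge t\phi_{1}$ in the stability step — handled by the Hopf lemma — and, more substantively, giving a precise meaning to ``minimiser of the energy'', since $\EE$ as written need not be bounded below on all of $H^{1}_{0}(\Omega)$: this is exactly why the minimisation must be confined to an order interval bounded by a strict supersolution, and why one uses that the set of admissible $\lambda$ is an interval.
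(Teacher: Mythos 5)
The paper offers no proof of this proposition --- it defers to \cite{stable-solutions-elliptic}, \S 1 --- so there is nothing internal to compare against; measured on its own, your argument is correct and is essentially the standard proof from that reference: construct the minimal solution $\underline u$ by monotone iteration between the subsolution $0$ and any solution, show it is semi-stable by contradiction with a supersolution of the form $\underline u-t\phi_1$, prove uniqueness among stable solutions via the convexity inequality $e^{u}-e^{\underline u}\leq e^{u}(u-\underline u)$ tested against the principal eigenfunction, and get energy minimality by differentiating $\EE$ along the segment joining $\underline u$ to a competitor. All the individual steps check out (the supersolution computation reduces to $-\mu_1\geq \tfrac{t}{2}\lambda e^{\underline u}\phi_1$, which holds for small $t$; the Hopf-lemma comparison $\underline u\geq t\phi_1$ is handled correctly; the $\lambda_1=0$ case of the uniqueness step is treated properly). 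The one caveat worth recording: your construction of a strict supersolution $\underline u_{\lambda'}$ with $\lambda'>\lambda$ requires $\lambda$ to be an interior point of the spectrum, which is consistent with the paper's own definition of the admissible set as the half-open interval $[0,\lambda_\ast)$, but would need a separate argument if one wanted the energy-minimisation statement at the extremal value $\lambda_\ast$ (where the paper's Table 1 still records a solution); the $L^\infty$-minimality, stability, and uniqueness parts of your proof do not use this and survive at the endpoint.
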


For more properties and proofs we recomend \cite{stable-solutions-elliptic},
$\S 1$. We define also the lower (or stable) branch of the curve
$$\mathcal S:=\{(\lambda,||u_\lambda||_{L^\infty(\Omega)}),\, u_\lambda\mbox{ is a
    solution of }(G_\lambda)\}.
$$
It turns out (see \cite{stable-solutions-elliptic}, \S 3.3), that for every $n$
there exists a maximum value $\lambda_\ast$ such that $\mathcal{S}$
is a smooth curve connecting $(0,0)$ with $(\lambda_\ast,
||u_{\lambda_\ast}||_{L^\infty(\Omega)})$, which is unbounded in the $||\cdot
||_{L^\infty}$ direction if $n\geq 10$.

% !TeX root = ../main.tex

\subsection{The case $\Omega=B^n$}
\label{sec:ball}
The celebrated theorem of Gidas-Ni-Nirenberg (see \cite{gidas-ni-nirenberg})
establishes that the solutions of
\begin{equation}
    (\mbox{GNN})\;\left\{ 
        \begin{array}{cc} \Delta u + f(u)=0 & \mbox{ in }B^n,\\
        u=0 & \mbox{ on }\partial B^n,
    \end{array}\right.
\end{equation}
are positive and radially symmetric provided that $f$ is positive and regular,
where $B^n=\{x\in\RR^n,|x|\leq 1\}.$ Therefore, if $\Omega=B^n$, all solutions
of $(G_\lambda)$ are radially symmetric and $(G_\lambda)$ is equivalent to the
problem of finding $u:[0,1]\to\RR$ such that
\begin{equation}
    \left\{\begin{array}{lc} 
            u''+\frac{n-1}{r}u'+\lambda\, e^u =0, & r\in (0,1),\\
    u'(0) = u(1) = 0.&  \end{array}\right.
\end{equation}
Notice that $||u||_{L^\infty}=u(0)$. For $n=1$, the equation $u''+\lambda e^u=0$
may be explicitely solved by means of the Laplace transform (see \cite{bratu}),
giving $u(x)=2\,\log\big(\alpha\,\mbox{sech}(\alpha\sqrt{\lambda/2}\cdot
x)\big)$, and the boundary conditions require that $\alpha$ is a solution to
the transcendental equation $\alpha
=\mbox{cosh}\big(\alpha\sqrt{\lambda/2}\big)$. One verifies that this leads to
zero, one, or two solutions for $\lambda>\lambda_c,\lambda=\lambda_c$ and
$\lambda<\lambda_c$ respectively, where $\lambda_c\approx .88$. Observe that
$||u||_{L^\infty}=u(0)=2\,\log\alpha$, and from the transcendental equation one
computes $\alpha_1=1+\frac{\lambda}{4}+o(1)$, $\alpha_2=\frac{4}{\lambda}+o(1)$.
It thus follows that, for any small and fixed $\lambda$, one solution
approaches to $||u||=0$ and the other one approaches to $||u||=\infty$, this is,
the solution curve $(\lambda, ||u_\lambda||)$ is unbounded, contained in
$[0,\lambda_c]\times \RR^+$. One can easily check that, in the upper branch of
solutions, $\lambda$ decreases with $||u||$, yielding the nose-shape of the
solution curve.

Other branches of solutions have been computed by numerical means, see for
instance \cite{new-solutions}.  The bi-dimensional case $\Omega=B^2$ can also
be solved explicitly and it presents a similar behaviour than the previous one.
Solutions exist if and only if $0\leq \lambda \leq 2$, and for $\lambda=2$
there is only one solution given by $u_\ast(r)=\log\frac{4}{(1+r^2)^2}$.
For other admissible values of $\lambda$, solutions are given by
$$
u_i(r)=\log\frac{b_i}{(1+(\lambda\, b_i/8)r^2)^2},
$$
where
$b_i=\frac{32}{\lambda^2}\big(1-\frac{\lambda}{4}+(-1)^i\sqrt{1-\lambda/2}\big),
i=1,2$.
\medskip

There is, however, a remarkable difference between $n=1$ and $n=2$: For
$\Omega=(-1,1)$, the unstable solution blows up at every point as $\lambda\to
0$, whereas for $\Omega=B^2$, it blows up only at the origin.
\medskip

For general dimension $n\geq 3$, the problem is analysed with a suitable change
of variables, and the behaviour of the solution yields from a dynamical coupled
system that arises from the study of the radial equation. This is the approach
considered by Dupaigne in \cite{stable-solutions-elliptic}; for other
instances of the Gelfand problem we strongly recommend this text. We summarize
the results for the radially symmetric case in every dimension in Table \ref{table:bifurcation-ball}.

\begin{table}[ht!]
    \center
    \renewcommand{\arraystretch}{1.2}
    \begin{tabular}{|c|p{.4\textwidth}|c|}
\hline
\textit{Dimension}                 & \textit{Number of solutions}                                            & \textit{Maximum value of $\lambda$}                  \\ \hline
\multirow{2}{*}{$n=1,2$}           & Two if $\lambda\in(0,\lambda_\ast)$                                     & \multirow{8}{*}{$\lambda_\ast(B^n)>2(n-2)$} \\ \cline{2-2}
                                   & One if $\lambda=\lambda_\ast$                                           &                                             \\ \cline{1-2}
\multirow{6}{*}{$3\leq n  \leq 9$} & One if $\lambda$ is sufficiently small                                  &                                             \\ \cline{2-2}
                                   & Finitely many if $\lambda\neq 2(n-2)$                                   &                                             \\ \cline{2-2}
                                   & More than any given number for $\lambda$ sufficiently close to $2(n-2)$ &                                             \\ \cline{2-2}
                                   & Infinitely many for $\lambda = 2(n-2)$                                  &                                             \\ \cline{2-2}
                                   & Two for $\lambda$ close to $\lambda_\ast(B^n)$                          &                                             \\ \cline{2-2}
                                   & One for $\lambda = \lambda_\ast(B^n)$                                   &                                             \\ \hline
$n\geq 10$                         & One unique stable solution                                              & $\lambda_\ast(B^n)=2(n-2)$                  \\ \hline
\end{tabular}
\caption{Bifurcation diagram for the Gelfand problem in $\Omega = B^n$ (\cite{stable-solutions-elliptic})}
\label{table:bifurcation-ball}
\end{table}

% !TeX root = ../main.tex

\subsection{Fermi coordinates}
\label{sec:fermi}

Our framework is a specific case of the one used in \cite{Pacard2014} for a
1-dimensional manifold, and we will indeed follow their notations. Consider the
tubular neighbourhood $T_\ep$ as defined in \ref{eq-tube}; The Fermi coordinates
parameterise this set as a product space between the curve $\Lambda$ and
$B^{n-1}$ as follows. First identify $\Lambda$ with the zero-section of
$N\Lambda$ (the normal bundle of $\Lambda$) and $T_\ep$ with
$$
\Omega_\ep:=\{(y,z)\in N\Lambda;\, y\in\Omega,\, z\in N_y\Lambda,\, |z|\leq\ep\}
$$
via the natural mapping $T_\ep\to\Omega_\ep$, $(y,z)\mapsto y+z$.

If $g_z:=dz^2$ is the Euclidean metric on normal fibers, and $\mathring{g}$ is
the metric induced on $\Lambda$, we have that the metric $\bar g$ on $N\Lambda$
is induced by the embedding of $\Lambda$ in $\RR^n$, this is $\bar g=\mathring
g+g_z$. Lemma 3.2 of \cite{Pacard2014} proves that, in these coordinates, the
Euclidean Laplacian $\Delta$ can be decomposed as
\begin{equation}
\Delta = \Delta_{\bar g} + D,
\end{equation}
where $\Delta_{\bar g}=\Delta_{\mathring g}+\Delta_{g_z}$ denotes the
Laplace-Beltrami operator on $N\Lambda$ for metric $\bar g$, and $D$ is a
second-order differential operator of the form
\begin{equation}
D = \sum_{i=1}^{n-1} z_iD_i^{(2)}+D^{(1)},
\end{equation}
where $D^{(1)}$ and $D^{(2)}$ are first-order and second-order partial
differential operators respectively, whose
coefficients are smooth and bounded.
Note that in this $1$--dimensional manifold, we have a parameterisation
$t:\RR\to\Lambda$ of the curve, and the Laplace-Beltrami operator of $\Lambda$
is simply $\partial_{tt}$. We therefore have
\begin{equation}
    \Delta = \partial_{tt}+\Delta_{g_z}+D.
\end{equation}
This decomposition, and in particular the form of the operator $D$, will allow
us to obtain estimates for functions defined in small tubes and prove our main
results, which we describe in the next sections.

\section{The stable solution in the tube}
\label{sec:stable-sol-in-tube}

In this section, we establish the following:

\begin{theorem}
\label{thm:unique-stable-sol}
For small $\ep$, there exists a unique stable solution to
\begin{equation}
\label{eq:small-gelfand}
\left\{
\begin{array}{cc}
\Delta u + \frac{\lambda}{\ep^2}e^u=0 & \mbox{ in }T_\ep(\Lambda),\\
u = 0 & \mbox{ on }\partial T_\ep(\Lambda).
\end{array}
\right.
\end{equation}
\end{theorem}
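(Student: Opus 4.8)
The plan is to rescale the thin tube to unit cross‑section, recognise \eqref{eq:small-gelfand} as a singular perturbation of the Gelfand problem in the unit ball $B^{n-1}$, produce a solution by the implicit function theorem, and then read off uniqueness from Proposition 4. Throughout I take $\lambda$ fixed in the admissible range $(0,\lambda_\ast(B^{n-1}))$ and $\alpha\in(0,1)$. First I would parameterise $T_\ep(\Lambda)$ by the Fermi coordinates $(t,z)$ of \S\ref{sec:fermi}, with $t$ an arclength coordinate on $\Lambda$ (of total length $L$) and $z\in B^{n-1}_\ep\subset N_t\Lambda$, and set $z=\ep\zeta$, $v(t,\zeta):=u(t,\ep\zeta)$. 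Using the decomposition $\Delta=\partial_{tt}+\Delta_{g_z}+D$ and the stated form of $D$ (smooth bounded coefficients, $D^{(2)}$ of order two, $D^{(1)}$ of order one), multiplying by $\ep^2$ and rewriting $\partial_{z_i}=\ep^{-1}\partial_{\zeta_i}$ turns $\ep^2\Delta u$ into $\Delta_\zeta v+\ep^2\partial_{tt}v+\ep\,\mathcal D_\ep v$, where $\Delta_\zeta$ is the flat Laplacian of $B^{n-1}$ and $\mathcal D_\ep$ is a second‑order operator whose coefficients are smooth, bounded uniformly in $\ep\in[0,\ep_0]$, and depend smoothly on $\ep$. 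Hence \eqref{eq:small-gelfand} is equivalent to the problem, posed on $\Lambda\times B^{n-1}$ (the unit disk bundle of $N\Lambda$, trivial since $\Lambda$ is a curve),
\begin{equation*}
    \Delta_\zeta v+\ep^2\partial_{tt}v+\ep\,\mathcal D_\ep v+\lambda e^v=0\quad\text{in }\Lambda\times B^{n-1},\qquad v=0\quad\text{on }\Lambda\times\partial B^{n-1}.
\end{equation*}

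Next I would apply the implicit function theorem to $F\colon[0,\ep_0)\times\mathcal X\to\mathcal Y$, $F(\ep,v):=\Delta_\zeta v+\ep^2\partial_{tt}v+\ep\,\mathcal D_\ep v+\lambda e^v$, with $\mathcal X:=\{v\in C^{2,\alpha}(\Lambda\times\bar B^{n-1}):v=0\text{ on }\Lambda\times\partial B^{n-1}\}$ and $\mathcal Y:=C^{0,\alpha}(\Lambda\times\bar B^{n-1})$. The map $F$ is smooth, and $F(0,v_\lambda)=0$ where $v_\lambda\in\mathcal X$ is the ($t$‑independent) stable solution of the Gelfand problem in $B^{n-1}$ provided by Proposition 4 — it exists precisely because $\lambda<\lambda_\ast(B^{n-1})$. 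The partial differential $D_vF(0,v_\lambda)\phi=\Delta_\zeta\phi+\lambda e^{v_\lambda}\phi$ acts fibrewise in $t$ and on each fibre is the linearisation of the Gelfand problem at its stable solution, which for $\lambda<\lambda_\ast(B^{n-1})$ is strictly positive, i.e. $\lambda_1(-\Delta_\zeta-\lambda e^{v_\lambda},B^{n-1})>0$ (Proposition 2); by elliptic theory it is an isomorphism between the fibre spaces, hence (being independent of $t$) an isomorphism $\mathcal X\to\mathcal Y$. The implicit function theorem then yields $\ep_1\in(0,\ep_0]$ and a smooth family $(v_\ep)_{\ep\in[0,\ep_1)}$ with $F(\ep,v_\ep)=0$, $v_0=v_\lambda$, and $\|v_\ep-v_\lambda\|_{C^{2,\alpha}}\to0$; undoing the rescaling gives a solution $u_\ep$ of \eqref{eq:small-gelfand}.

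The remaining point is uniqueness, together with the identification of the stable solution. Since $T_\ep(\Lambda)$ is bounded and connected and $u_\ep$ solves \eqref{eq:small-gelfand}, the parameter $\lambda/\ep^2$ is admissible for the Gelfand problem in $T_\ep(\Lambda)$, so Proposition 4 furnishes both the existence and the uniqueness of a stable solution there — which already proves the theorem. To see that this stable solution is $u_\ep$, I would verify stability via Proposition 2: writing the Rayleigh quotient of $-\Delta-\tfrac{\lambda}{\ep^2}e^{u_\ep}$ on $T_\ep(\Lambda)$ in the coordinates $(t,\zeta)$, using that the metric and volume element of $T_\ep(\Lambda)$ are $(1+O(\ep))$ times the product ones on $\Lambda\times B^{n-1}$ (cf. \S\ref{sec:fermi} and \cite{Pacard2014}) and that $u_\ep$ is uniformly bounded, and applying Fubini in $t$, one obtains for small $\ep$
\begin{equation*}
    \ep^2\,\lambda_1\!\Big(-\Delta-\tfrac{\lambda}{\ep^2}e^{u_\ep},\,T_\ep(\Lambda)\Big)\;\ge\;(1-C\ep)\,c_\ep-C\ep,\qquad c_\ep:=\inf_{t}\,\lambda_1\big(-\Delta_\zeta-\lambda e^{v_\ep(t,\cdot)},\,B^{n-1}\big).
\end{equation*}
As $v_\ep\to v_\lambda$ uniformly, $c_\ep\to\lambda_1(-\Delta_\zeta-\lambda e^{v_\lambda},B^{n-1})>0$, so the right‑hand side is positive for $\ep$ small; hence $u_\ep$ is strictly stable and coincides with the unique stable solution.

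The routine parts are the rescaling bookkeeping and the implicit function theorem, the latter using only the non‑degeneracy of $v_\lambda$, which holds for \emph{every} $\lambda<\lambda_\ast(B^{n-1})$ and not merely for generic $\lambda$. The step I expect to be the main obstacle is the uniform spectral estimate above: as $\ep\to0$ the linearised operator degenerates — it loses ellipticity in the $t$‑direction and its first eigenvalue blows up like $\ep^{-2}$ — so one must keep track of the sign of $\lambda_1$ while absorbing the curvature corrections of the induced metric and the lower‑order terms of $\mathcal D_\ep$, and the crux is that these perturb the Rayleigh quotient only by a relative $O(\ep)$ rather than by a fixed factor on the dominant Dirichlet term. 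One should also bear in mind that the construction is confined to $\lambda<\lambda_\ast(B^{n-1})$, i.e. to parameters $\lambda/\ep^2$ below the (rescaled) turning point of $T_\ep(\Lambda)$, beyond which there is no model stable solution to perturb from, in agreement with the nose‑shaped picture announced in the introduction.
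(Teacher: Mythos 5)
Your overall architecture mirrors the paper's: rescale the tube to unit cross-section, perturb off the $t$-independent stable solution of the $(n-1)$-dimensional Gelfand problem, verify stability spectrally, and obtain global uniqueness of stable solutions from convexity of the exponential (the paper reproves this last point directly, multiplying the equation for $u_2-u_1$ by $v_\pm$ and using stability of one of the two solutions, rather than quoting Proposition 4, but the content is the same). The gap is in your existence mechanism. You apply the implicit function theorem at $\ep=0$ with $\mathcal X=\{v\in C^{2,\alpha}(\Lambda\times\bar B^{n-1}):v=0 \mbox{ on } \Lambda\times\partial B^{n-1}\}$ and $\mathcal Y=C^{0,\alpha}(\Lambda\times\bar B^{n-1})$, and claim that $D_vF(0,v_\lambda)=\Delta_\zeta+\lambda e^{v_\lambda}$ is an isomorphism because it acts fibrewise and each fibre operator is invertible. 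It is injective, but it is \emph{not} surjective onto $\mathcal Y$: solving fibre by fibre gains two derivatives in $\zeta$ but none in $t$. Concretely, take $f(t,\zeta)=g(t)h(\zeta)$ with $g\in C^{0,\alpha}(\Lambda)$ nowhere differentiable and $h$ smooth with nonzero fibrewise solution $\psi$; the unique candidate preimage is $g(t)\psi(\zeta)$, which has no $\partial_{tt}$ and so lies outside $\mathcal X$, hence $f$ is not in the range. (Choosing instead $\mathcal X=C^0(\Lambda;C^{2,\alpha}(\bar B^{n-1}))$ with zero boundary values restores the isomorphism, but then $\ep^2\partial_{tt}$ no longer maps $\mathcal X$ into $\mathcal Y$, so $F(\ep,\cdot)$ is undefined for $\ep>0$.) This is exactly the degeneration in the $t$-direction that you flag later as the obstacle for the spectral estimate, but it already defeats the IFT step itself.

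The paper circumvents this by never setting $\ep=0$: it keeps the tangential Laplacian inside the principal operator $L=-(\Delta_{\bar g}+\lambda e^{U})$, whose eigenvalues $\mu_i+\ep^2\nu_j\geq\mu_1>0$ give an inverse bounded uniformly in $\ep$, and it supplements this with the uniform a priori bound $|\phi|\leq 2\|f\|_{L^\infty}\,w_\ep$ coming from the explicit barrier $w_\ep$ (satisfying $\ep^2\Delta w_\ep+\lambda e^{u_\ep}w_\ep\leq -1/2$) and the maximum principle; existence then follows from a contraction mapping on the $\ep$-dependent ball $\{\|v\|_{L^\infty}\leq C\ep\}$. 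To repair your argument you would either have to prove such a uniform-in-$\ep$ invertibility statement in $\ep$-adapted norms (weighting each $\partial_t$ by $\ep$), or construct an explicit supersolution such as $u_\ep+A\ep w_\ep$ and invoke the minimal-branch theory to get existence; the off-the-shelf implicit function theorem in fixed H\"older spaces does not close. Your stability verification and the appeal to Proposition 4 for uniqueness are fine once a solution near $u_\ep$ is actually in hand.
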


\noindent An outline of the proof follows:

\begin{itemize}
\item Let $U$ be the unique radial stable solution of $(G_\lambda)$ for the
$(n-1)$-dimensional ball. We choose $u_\ep$, a rescaled version of $U$ that
travels along $\Lambda$, this is, for each section of the normal
bundle of $\Lambda$, consider the $(n-1)$-dimensional ball of radius $\ep$
centered about $\Lambda$ and use a copy of $U$.
\item The function $u_\ep$ verifies approximately the equation and it is stable
in the sense of section \ref{sec:stable-branch}. To prove this, we find a
super-solution to the problem and use a version of the maximum principle.
\item We use a fixed-point argument to prove the existence of a genuine solution
to \ref{eq:small-gelfand} of the form $u_\ep+v$, where $v$ is small. The
fixed-point theorem ensures the uniqueness of the perturbation if $v$ is small
enough. Finally, we show that larger perturbations do not lead to stable
solutions, proving the result.
\end{itemize}

Let us first collect some lemmas that will help us prove
\ref{thm:unique-stable-sol}.
\subsection{Some elliptic estimates}
Throughout this section, $||\cdot||$
stands for the $L^\infty$ norm in the tube
$T_\ep$, unless stated otherwise. Let $U(r)$ be the unique (radial) stable
solution of $(G_\lambda)$ for $\Omega
= B^{n-1}$, \ie
\begin{equation}
\left\{\begin{array}{cc}
\Delta U + \lambda\, e^U=0 & \mbox{ in }B^{n-1},\\
U = 0 & \mbox{ on }\partial B^{n-1},
\end{array}\right.
\end{equation}

\noindent and define $u_\ep:T_\ep\to \RR$ by
$$
u_\ep(y,z):=U\bigg(\frac{\mbox{dist}(z,\Lambda)}{\ep}\bigg).
$$
As the non-linearity $\lambda\,e^u$ is not multiplicative, we have that the
function $u_\ep$ will not verify the Gelfand equation in the tube, nor an
approximation, but the following estimate.

\begin{lemma}\label{lem-bound_Cepsilon} There is a constant $C$ such that
\begin{equation}
|\ep^2\Delta u_\ep+\lambda\, e^{u_\ep}|\leq C\ep.
\end{equation}
\end{lemma}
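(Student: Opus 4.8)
The plan is to exploit the Fermi-coordinate decomposition $\Delta = \partial_{tt} + \Delta_{g_z} + D$ from Section~\ref{sec:fermi}, using the fact that $u_\ep$ is a copy of $U$, rescaled in the normal variable only. For $\ep$ small the tubular neighbourhood is embedded, so $\dist(z,\Lambda) = |z|$ and $u_\ep(y,z) = U(|z|/\ep)$ has no dependence on the arclength parameter $t$ of $\Lambda$; hence $\partial_{tt} u_\ep \equiv 0$, and also every mixed derivative involving $t$ vanishes. Since $g_z$ is the flat fibre metric, $\Delta_{g_z}$ is the Euclidean Laplacian in $z\in\RR^{n-1}$, and the scaling identity $\Delta_z\big(U(\,\cdot\,/\ep)\big) = \ep^{-2}(\Delta U)(\,\cdot\,/\ep)$ combined with $\Delta U = -\lambda e^{U}$ in $B^{n-1}$ gives
\begin{equation}
\ep^2 \Delta_{g_z} u_\ep + \lambda e^{u_\ep} \equiv 0 .
\end{equation}
Therefore $\ep^2 \Delta u_\ep + \lambda e^{u_\ep} = \ep^2 D u_\ep$, and the lemma is reduced to the pointwise bound $|D u_\ep| \leq C/\ep$ in $T_\ep$.

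For that bound I would estimate the derivatives of $u_\ep$ directly from the formula $u_\ep(y,z) = U(|z|/\ep)$. Since $U$ is a classical radial solution it is smooth with $U'(0)=0$, so $u_\ep$ is smooth (including at $z=0$), and by the chain rule its first-order $z$-derivatives are $O(\ep^{-1})$ and its second-order $z$-derivatives are $O(\ep^{-2})$, uniformly in the tube, using boundedness of $U, U', U''$ and of $r\mapsto U'(r)/r$ on $[0,1]$. The coefficients of $D^{(1)}$ and $D^{(2)}$ are smooth and, as $\Lambda$ is compact, bounded on $T_1 \supseteq T_\ep$ uniformly in $\ep\le 1$. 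Hence $D^{(1)} u_\ep = O(\ep^{-1})$, while in the second-order part each term $z_i D_i^{(2)} u_\ep$ picks up the factor $|z_i| \le |z| \le \ep$, so $z_i D_i^{(2)} u_\ep = O(\ep^{-1})$ as well. Multiplying by $\ep^2$ yields $|\ep^2 D u_\ep| \le C\ep$ with $C$ independent of $\ep$.

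I do not expect a serious obstacle: once the decomposition of $\Delta$ is in place this is essentially a bookkeeping argument. The single point that must be used carefully is precisely the weight $z_i$ multiplying $D_i^{(2)}$ in the structure of $D$ recorded in Section~\ref{sec:fermi}: a generic second derivative of $u_\ep$ is only $O(\ep^{-2})$, so without that factor one would lose the last power of $\ep$ and the error would be $O(1)$ rather than $O(\ep)$. In other words, the estimate is sharp exactly because it is the coefficient $z_i$ (of size $\le\ep$) that absorbs the worst derivatives of the approximate solution.
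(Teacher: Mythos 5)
Your argument is exactly the paper's proof: both reduce the error to $\ep^2 D u_\ep$ via the Fermi decomposition and the identity $\ep^2\Delta_{g_z}u_\ep+\lambda e^{u_\ep}=0$, and both gain the final power of $\ep$ from the factor $z_i$ multiplying the second-order part of $D$. Your version is slightly more explicit about the smoothness of $U$ at the origin and the uniform boundedness of the coefficients of $D$, but the route is the same.
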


\begin{proof} It follows from the definition of $u_\ep$ that
$\ep^2\Delta_{g_z}u_\ep + \lambda\, e^{u_\ep}=0$, and note that $u_\ep$ does not
depend on the parameter $t$. Using the expression of the Euclidean Laplacian in
Fermi coordinates, we have
$$\ep^2\, \Delta u_\ep+\lambda e^{u_\ep} = (\ep^2\, \Delta_{g_z} u_\ep +
\lambda e^{u_\ep})+\ep^2(\partial_{tt}+D)u_\ep = \ep^2 D\, u_\ep,
$$
therefore, the estimate
\begin{eqnarray}
|\ep^2\Delta u_\ep + \lambda e^{u_\ep}|&\leq& \ep^2\, ||Du_\ep|| \\
&\leq& \sum_{i=1}^{n-1} \big(\ep^2||z_iD^{(2)}u_\ep||+\ep^2||D^{(1)}u_\ep||\big)\\
    &=& \sum_{i=1}^{n-1}\big(\ep
    ||x_iD^{(2)}U||_{L^\infty(B)}+\ep||D^{(1)}U||_{L^\infty(B)}\big)
\end{eqnarray}
holds and the result follows.
\end{proof}
\medskip

The stability of $U$ implies that there exist $\mu_1>0$ and $\phi_1>0$ such that
\begin{equation}
\left\{\begin{array}{cc}
-(\Delta+\lambda e^U)\phi_1=\mu_1\phi_1 & \mbox{ in }B^{n-1},\\
\phi=0 & \mbox{ on }\partial B^{n-1},
\end{array}\right.
\end{equation}
and hence the linearized operator about $U$ is invertible. This proves the
existence of a function $W$, which is a super-solution to the linearized
equation in $B^{n-1}$, verifying
\begin{equation}
\left\{\begin{array}{cc}
-(\Delta+\lambda e^U)W=1 & \mbox{ in }B^{n-1},\\
W=0 & \mbox{ on }\partial B^{n-1}.
\end{array}\right.
\end{equation}

\begin{lemma} There is a constant $B$ such that $|W|\leq B, |\nabla W|\leq
B, |\Delta W|\leq B$.
\end{lemma}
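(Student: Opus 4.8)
The statement is that $W$, the solution to $-(\Delta + \lambda e^U)W = 1$ in $B^{n-1}$ with $W = 0$ on the boundary, satisfies $|W| \leq B$, $|\nabla W| \leq B$, $|\Delta W| \leq B$ for some constant $B$. This is a standard elliptic regularity argument since $W$ is a fixed function (not depending on $\ep$), defined on the fixed ball $B^{n-1}$. Let me think about how to structure it.

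First, note $U \in C^\infty(\overline{B^{n-1}})$ since it solves a semilinear elliptic equation with smooth data (or at least $U$ is smooth in the interior and $C^2$ up to the boundary). So $\lambda e^U$ is a smooth, bounded function on $\overline{B^{n-1}}$; in particular the coefficient of the zeroth-order term is bounded. Since the linearized operator $-(\Delta + \lambda e^U)$ has first eigenvalue $\mu_1 > 0$ (from the stability of $U$, as stated just above), it is invertible from, say, $H^1_0 \cap H^2$ to $L^2$, so $W$ exists and is unique in that space. Then elliptic $L^p$ estimates (Calderón–Zygmund) give $W \in W^{2,p}(B^{n-1})$ for all finite $p$, with $\|W\|_{W^{2,p}} \leq C(\|1\|_{L^p} + \|W\|_{L^p}) = C(|B^{n-1}|^{1/p} + \|W\|_{L^p})$.

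The plan, then: (1) $L^\infty$ bound on $W$. Either invoke the maximum principle — since $-(\Delta + \lambda e^U)$ satisfies a maximum principle (its principal eigenvalue is positive, so by the refined maximum principle of Berestycki–Nirenberg–Varadhan the operator admits a positive supersolution and comparison holds), and $W$ solves $-(\Delta + \lambda e^U)W = 1 > 0$ with zero boundary data, we get $0 < W$, and an upper bound follows by comparison with a fixed supersolution, e.g. a large multiple of $W$'s own structure or of the torsion function. Alternatively, get $\|W\|_{L^2}$ from $\mu_1\|W\|_{L^2}^2 \leq \int (-(\Delta+\lambda e^U)W)W = \int W \leq |B^{n-1}|^{1/2}\|W\|_{L^2}$, hence $\|W\|_{L^2} \leq |B^{n-1}|^{1/2}/\mu_1$, then bootstrap via Sobolev embedding and $W^{2,p}$ estimates to $L^\infty$. (2) Once $W \in L^\infty$, elliptic $W^{2,p}$ estimates with $p > n-1$ and Sobolev embedding $W^{2,p} \hookrightarrow C^{1,\alpha}$ give the bounds on $W$ and $\nabla W$. (3) For $\Delta W$: rearrange the equation, $\Delta W = -\lambda e^U W - 1$, and the right side is bounded since $U$ and $W$ are bounded. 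This last step is trivial once (1) is done.

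The main obstacle — really the only point requiring care — is the $L^\infty$ bound on $W$, step (1), since the zeroth-order coefficient $\lambda e^U$ has the "wrong sign" for a naive maximum principle. The cleanest route is to use the positive principal eigenvalue: because $\mu_1 > 0$, the operator $-(\Delta + \lambda e^U)$ satisfies the maximum principle in $B^{n-1}$ (equivalently, by one of the propositions above, there is $v > 0$ with $-(\Delta + \lambda e^U)v \geq 0$; in fact $\phi_1$ works), so one can construct an explicit bounded supersolution and conclude $0 \leq W \leq B$ by comparison. The $W^{2,p}$ and Schauder estimates used afterward are entirely classical and for the fixed domain $B^{n-1}$ involve no $\ep$-dependence, so the constant $B$ is a genuine constant.
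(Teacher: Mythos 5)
Your proof is correct, but it takes a genuinely different route from the paper. You treat the lemma as a statement of elliptic regularity on the fixed domain $B^{n-1}$: invertibility of $-(\Delta+\lambda e^U)$ from $\mu_1>0$, an $L^2$ energy bound $\|W\|_{L^2}\leq |B^{n-1}|^{1/2}/\mu_1$, a bootstrap through $W^{2,p}$ and Sobolev embedding to reach $L^\infty$ and $C^{1,\alpha}$, and finally $\Delta W=-\lambda e^U W-1$ read off from the equation. The paper instead argues by hand: it invokes Gidas--Ni--Nirenberg to get that $W$ is radial, positive and decreasing (so $|W|\leq W(0)$), reduces to the radial ODE for $a(r)=W(z)$, and bounds $a'$ with an explicit integrating factor before concluding $|\Delta W|\leq 1+\lambda e^U W$ as you do. Your approach is more robust (no radial symmetry needed, and it makes transparent the one point that actually matters for the paper, namely that $W$ is a fixed function independent of $\ep$ so any regularity constant will do); the paper's is more elementary, avoiding $W^{2,p}$ theory, and yields the extra qualitative information $W>0$, $\partial_r W<0$. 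One small caveat on your side: the comparison-function variant of step (1) is left vague, and the specific suggestion of a multiple of the torsion function does not directly produce a supersolution of $-(\Delta+\lambda e^U)v\geq 1$ because the zeroth-order term has the unfavourable sign; but the $L^2$-plus-bootstrap alternative you also give is complete, so the proof stands.
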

\begin{proof}
By the GNN theorem, $W$ is radially symmetric, positive and $\partial_r W<0$,
which gives $|W|\leq W(0)$. Write $W(z)=a(r)$, then $a$ solves
\begin{equation}
\left\{
\begin{array}{lc}
a''(r)+\frac{n-1}{r^2}a'(r)+\lambda e^Ua+1=0, & 0<r<1,\\
a(1) = 0, a'(0) = 0. &
\end{array}
\right.
\end{equation}
As $a(r)$ is bounded and positive, we have 
\begin{equation}
    \label{eq:bounding-a'}
-C\leq a''(r)+\frac{n-1}{r^2}a'(r)\leq -1
\end{equation}
for a positive constant $C$.
Multiplying \ref{eq:bounding-a'} by $e^{-\frac{n-1}{r}}$ we have $
-Ce^{-\frac{n-1}{r}}\leq \big(e^{-\frac{n-1}{r}}a'(r)\big)'\leq
e^{-\frac{n-1}{r}}$, \ie  
\begin{eqnarray*}
    |a'(r)|\leq  C_1\cdot e^{\frac{n-1}{r}}\int_0^re^{-\frac{n-1}{t}}\, dt \leq
    C_1  
\end{eqnarray*}
for a constant $C_1=\max(C,1)$. This yields $|\nabla W(z)|=|a(r)|\leq C_1$.
Finally, write $|\Delta W|\leq 1+e^UW\leq C_2$ for a positive constant $C_2$
and define $B:=\max(C_1,C_2,W(0))$.

\end{proof}

Similarly, define $w_\ep:T_\ep\to \RR$ by
$w_\ep(y,z):=W\big(\frac{\operatorname{dist}(z,\Lambda)}{\ep}\big)$. The
corresponding estimate for $w_\ep$ will be given by the following. 

\begin{lemma}
For sufficiently small $\ep$, the function $w_\ep$ verifies
\begin{equation}
    \label{eq-lemma--1/2}
\ep^2 \Delta w_\ep + \lambda e^{u_\ep}w_\ep \leq -1/2.
\end{equation}
\end{lemma}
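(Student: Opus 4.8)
The strategy mirrors exactly the computation in Lemma~\ref{lem-bound_Cepsilon}. The function $w_\ep$ is built from $W$ in the same way $u_\ep$ is built from $U$: it depends only on the normal variable $z$ (through $\operatorname{dist}(z,\Lambda)/\ep$) and not on the curve parameter $t$. So I would first write, using the Fermi-coordinate decomposition $\Delta=\partial_{tt}+\Delta_{g_z}+D$,
\[
\ep^2\Delta w_\ep+\lambda e^{u_\ep}w_\ep
=\big(\ep^2\Delta_{g_z}w_\ep+\lambda e^{u_\ep}w_\ep\big)+\ep^2(\partial_{tt}+D)w_\ep
=\big(\ep^2\Delta_{g_z}w_\ep+\lambda e^{u_\ep}w_\ep\big)+\ep^2 D w_\ep,
\]
since $\partial_{tt}w_\ep=0$. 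The first parenthesis is the ``main term''. By the defining equation for $W$, namely $-(\Delta+\lambda e^U)W=1$ in $B^{n-1}$, and the fact that rescaling by $\ep$ turns $\ep^2\Delta_{g_z}$ acting on $W(\cdot/\ep)$ into $(\Delta W)(\cdot/\ep)$, one gets
\[
\ep^2\Delta_{g_z}w_\ep+\lambda e^{u_\ep}w_\ep=(\Delta W+\lambda e^U W)\Big(\tfrac{\operatorname{dist}(z,\Lambda)}{\ep}\Big)=-1,
\]
because $u_\ep(y,z)=U(\operatorname{dist}(z,\Lambda)/\ep)$, so $e^{u_\ep}$ evaluated at the point matches $e^U$ evaluated at the rescaled point. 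Thus the main term is identically $-1$.

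It remains to control the error term $\ep^2 D w_\ep$. Here I would invoke the structure $D=\sum_{i=1}^{n-1} z_i D_i^{(2)}+D^{(1)}$ with smooth bounded coefficients, exactly as in Lemma~\ref{lem-bound_Cepsilon}: the factor $z_i$ contributes an extra $\ep$ (since $|z_i|\le\ep$), and the derivatives of $w_\ep$ up to second order are, after the change of variables $z\mapsto z/\ep$, bounded by $\ep^{-2}$ times the corresponding $L^\infty$ norms of $W,\nabla W,\Delta W$ on $B^{n-1}$ --- all finite by the preceding lemma. Carrying out the same bookkeeping gives $\ep^2\|Dw_\ep\|\le C\ep$ for a constant $C$ independent of $\ep$, hence
\[
\ep^2\Delta w_\ep+\lambda e^{u_\ep}w_\ep\le -1+C\ep.
\]
Choosing $\ep$ small enough that $C\ep\le 1/2$ yields the claimed bound $\le -1/2$.

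The only genuinely delicate point --- and the one I would state carefully --- is that the ``main term'' is the full $-1$ and not merely $-1$ up to an extra error coming from the discrepancy between $e^{u_\ep}$ and $e^U$: these agree exactly because both $u_\ep$ and $w_\ep$ are constant along $t$ and are built by the \emph{same} rescaling, so the nonlinear coefficient $\lambda e^{u_\ep}$ at a point $(y,z)$ equals $\lambda e^{U}$ at the corresponding point of $B^{n-1}$. Everything else is a routine repetition of the estimate already carried out for $u_\ep$, so I do not expect any real obstacle; one just needs the second-order bounds on $W$ provided by the previous lemma and the smoothness/boundedness of the coefficients of $D$ from the Fermi-coordinate expansion. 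If one wanted to be slightly more careful, one could note that $\ep^2 D_i^{(2)}$ applied to $w_\ep$ also produces terms with only one or zero derivatives on $W$ (from the cross terms and lower-order part), which are of lower order in $\ep^{-1}$ and hence only help.
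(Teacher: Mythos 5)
Your proof is correct and follows essentially the same route as the paper: decompose $\Delta$ in Fermi coordinates, use that $w_\ep$ is independent of $t$ and that the rescaled equation for $W$ makes the main term exactly $-1$, then bound $\ep^2 D w_\ep$ by $C\ep$ exactly as in Lemma~\ref{lem-bound_Cepsilon} and take $\ep$ small. Your extra remark that $e^{u_\ep}$ matches $e^U$ under the common rescaling is a useful clarification of a point the paper leaves implicit, but the argument is the same.
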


\begin{proof}
Again note that $w_\ep$ does not depend on the parameter $t$ and that
$-(\ep^2\Delta_{g_z}w_\ep + e^{u_\ep}w_\ep)=1$. We then have that $(\ep^2\Delta
+ \lambda e^{u_\ep})w_\ep = (\ep^2\Delta_{g_z}+\lambda
e^{u_\ep})w_\ep+\ep^2(\partial_{tt} + D)w_\ep = -1+\ep^2Dw_\ep$. As $|\ep^2
Dw_\ep|\leq \ep C'$ for a constant $C'$ (as in the proof of lemma 1)
, the result follows for small $\ep$.
\end{proof}

Consider now the invertibility problem for the linearized operator
$L_\ep:=-(\ep^2 \Delta + e^{u_\ep})$ in $T_\ep(\Lambda)$: Given a smooth $f$ in
the tube, find $\phi$ such that
\begin{equation}
    \label{eq-lin-ep}
    \left\{
    \begin{array}{cc}
        L_\ep\phi = f & \mbox{ in }T_\ep(\Lambda),\\
        \phi = 0& \mbox{ on } \partial T_\ep(\Lambda).
    \end{array}
    \right.
\end{equation}

\begin{lemma} Let $\phi$ solve \ref{eq-lin-ep}, then
    \label{lem-phi}
    $\phi\leq 2 ||f||_{L^\infty(T_\ep)}\cdot w_\ep. $
    Moreover, $L_\ep$ is invertible given that $\ep$ is small enough.
\end{lemma}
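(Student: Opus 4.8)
The plan is a one-sided barrier estimate, which will simultaneously give the pointwise bound and the injectivity of $L_\ep$, followed by the Fredholm alternative for surjectivity. First I would fix a solution $\phi$ of \eqref{eq-lin-ep} and set $\psi := 2\,\|f\|_{L^\infty(T_\ep)}\,w_\ep-\phi$. Since $W$ vanishes on $\partial B^{n-1}$ the function $w_\ep$ vanishes on $\partial T_\ep$, so $\psi = 0$ there; and, rewriting \eqref{eq-lemma--1/2} as $L_\ep w_\ep\ge \tfrac12$ and using $L_\ep\phi=f$,
\[
L_\ep\psi \;=\; 2\,\|f\|_{L^\infty(T_\ep)}\,L_\ep w_\ep - f \;\ge\; \|f\|_{L^\infty(T_\ep)}-f \;\ge\; 0 \qquad\text{in }T_\ep .
\]
Thus $\psi$ is a super-solution of $L_\ep$ with nonnegative right-hand side and zero boundary values, and it remains to conclude $\psi\ge 0$.

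The zeroth-order coefficient of $L_\ep$ has the unfavourable sign, so the weak maximum principle does not apply directly; instead I would exploit that $w_\ep$ is a strictly positive super-solution, which is exactly the mechanism behind the characterization of stability by positive supersolutions recalled in Section~\ref{sec:stable-branch}. Concretely, picking $\delta\in\bigl(0,\tfrac12(\lambda e^{U(0)})^{-1}\bigr)$ and putting $h:=w_\ep+\delta$ one has $h\ge\delta>0$ on $\overline{T_\ep}$ and $L_\ep h = L_\ep w_\ep-\lambda e^{u_\ep}\delta\ge\tfrac12-\lambda e^{U(0)}\delta>0$. Writing $\psi=h\sigma$ converts $L_\ep\psi\ge0$ into $\widehat L_\ep\sigma\ge0$ for an operator $\widehat L_\ep$ that is uniformly elliptic (for each fixed small $\ep$), has bounded coefficients, and has the \emph{nonnegative} zeroth-order term $L_\ep h/h$, while $\sigma=\psi/h=0$ on $\partial T_\ep$; the classical weak maximum principle then gives $\sigma\ge0$, i.e. $\phi\le 2\,\|f\|_{L^\infty(T_\ep)}\,w_\ep$. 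Running the same comparison with $2\,\|f\|_{L^\infty(T_\ep)}\,w_\ep+\phi$ yields the matching lower bound, hence the a priori estimate $\|\phi\|_{L^\infty(T_\ep)}\le 2B\,\|f\|_{L^\infty(T_\ep)}$, $B$ being the constant of Lemma~2. One may instead avoid the parameter $\delta$ and control $\phi/w_\ep$ up to $\partial T_\ep$ via Hopf's lemma, which is available since $W'(1)<0$.

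For invertibility, taking $f=0$ in the a priori estimate shows $L_\ep$ is injective for all small $\ep$. Since $L_\ep$ differs from the Dirichlet operator $-\ep^2\Delta$, an isomorphism (say $H^2\cap H^1_0(T_\ep)\to L^2(T_\ep)$), only by the multiplication operator $-\lambda e^{u_\ep}$, which is compact from $H^2\cap H^1_0$ to $L^2$ by Rellich's theorem, $L_\ep$ is Fredholm of index $0$; being injective, it is bijective. Elliptic regularity upgrades this to invertibility in whichever Hölder or Sobolev scale one works with, with $\|L_\ep^{-1}\|\le 2B$ in the sup-norm as a direct consequence of the a priori estimate.

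The only genuinely delicate point is the maximum-principle step, because $w_\ep$ is not bounded below away from $\partial T_\ep$; the passage to $w_\ep+\delta$ (or the Hopf-lemma alternative) resolves it, and the smallness of $\ep$ is needed only to have \eqref{eq-lemma--1/2} at one's disposal. The remainder is routine elliptic theory.
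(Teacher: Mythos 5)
Your argument is correct, but it takes a genuinely different route from the paper on both halves of the lemma. For the pointwise bound, the paper forms essentially the same comparison function $w_\ep-\phi/(2\|f\|)$ but then concludes by invoking a strong maximum principle for $-\Delta+a(x)$ said to hold ``regardless of the sign of $a(x)$''; as a tool for deducing nonnegativity from zero boundary data that claim is too strong (for $a=-\lambda_1-1$ the function $u=-\phi_1$ satisfies $(-\Delta+a)u\geq 0$, vanishes on the boundary, and is negative inside), and what actually rescues the comparison is exactly the mechanism you make explicit: $L_\ep$ admits the positive strict supersolution $h=w_\ep+\delta$, and the substitution $\psi=h\sigma$ reduces the problem to a weak maximum principle with nonnegative zeroth-order coefficient. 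On this point your write-up is the more careful of the two, and it is the same ``positive supersolution'' philosophy the paper itself records in Section \ref{sec:stable-branch}. For invertibility the two proofs diverge completely: you combine the injectivity furnished by the a priori estimate with the Fredholm alternative, viewing $L_\ep$ as a compact perturbation of the Dirichlet operator $-\ep^2\Delta$, which is short, self-contained, and avoids Fermi coordinates altogether; the paper instead rescales to $T_1(\Lambda)$, identifies the eigenvalues of the model operator $L=-(\Delta_{\bar g}+\lambda e^U)$ as $\mu_i+\ep^2\nu_j$ with $\mu_1>0$, and absorbs $\ep D$ by a contraction argument. The paper's longer route pays off later, since the explicit spectral picture and the bound $\|L^{-1}\|\leq 1/\mu_1$ are reused in the stability discussion of Theorem \ref{thm:unique-stable-sol}; your route buys economy, and the uniform-in-$\ep$ control that the subsequent fixed-point argument needs is still available because your sup-norm estimate $\|L_\ep^{-1}\|\leq 2B$ comes directly from the barrier, not from the Fredholm step.
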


\begin{proof} Let us first give the a-priori estimate by means of the maximum
    principle. From equations \ref{eq-lemma--1/2} and \ref{eq-lin-ep} we have
    \begin{equation}
        L_\ep w_\ep \geq \frac{1}{2}\;\mbox{ and }\;
        L_\ep\bigg(\frac{\phi}{2||f||}\bigg)\leq \frac{1}{2}.
    \end{equation}
    By addition, we have $L_\ep\big(w_\ep-\frac{1}{2||f||}\phi\big)\geq 0$.
    Recall the strong maximum principle: If $u$ is a smooth function verifying
    $-\Delta u\geq 0$ in a connected domain $\Omega$ of $\RR^n$, then if $u$
    attains a minimum in $\Omega$, $u$ is constant. The same conclusion holds
    if $-\Delta$ is replaced by $-\Delta+a(x)$ with $a\in L^p(\Omega)$ and
    $p\leq n/2$ (see \cite{1980JPhA...13..417H} for a proof of this theorem with
    applications to a Helium-like system). Note that the conclusion holds
    regardless of the sign of $a(x)$. Apply the strong maximum principle to the
    operator $L_\ep$ to conclude that either $w_\ep-\frac{1}{2||f||}\phi$ is
    constant or it attains a minimum in $\partial T_\ep(\Lambda)$. Because both
    $w_\ep$ and $\phi$ vanish on the boundary, in both cases
    $w_\ep-\frac{1}{2||f||}\phi\geq 0$, proving the first assertion.
    To prove that $L_\ep$ is invertible, write the
    problem in Fermi coordinates $(y,z)\in\Lambda\times B^{n-1}$:
    \begin{equation}
        \left\{
            \begin{array}{cc}
                -(\ep^2 \Delta_{\bar g}+\lambda e^{u_\ep})\phi - \ep^2\, D=f & \mbox{ in }T_\ep(\Lambda),\\
                \phi=0 & \mbox{ on }\partial T_\ep(\Lambda).
            \end{array}
            \right.
    \end{equation}

    Recall that $D$ is of the form $\sum_{i=1}^{n-1} z_iD^{(2)}+D^{(1)}$ where
    $D^{(i)}$ is an $i$--differential operator with smoothly bounded
    coefficients. In order to work in a domain with no dependence in $\ep$, use the
    scaling $z\mapsto z/\ep$ and define $\Phi(y,z):=\phi(y,\ep z),
    F(y,z):=f(y,\ep z)$.
    After simple manipulation, the problem reads
    \begin{equation}
        \left\{
            \begin{array}{cc}
                (L-\ep D)\Phi = F& \mbox{ in }T_1(\Lambda),\\
                \Phi=0 & \mbox{ on }\partial T_1(\Lambda).
            \end{array}
            \right.
    \end{equation}
    where $L=-(\Delta_{\bar g} + \lambda e^U)$ is invertible by hypothesis.
    Indeed, its eigenvalues are numbers of the form $\mu_i+\ep^2 \nu_j$ where
    $0<\mu_1\leq \mu_2\leq \mu_3\leq \dots$ are the eigenvalues of
    $-(\Delta_{g_z}+\lambda e^U)$ in $B^{n-1}$ and $0=\nu_1<\nu_2\leq \nu_3\leq
    \dots$ are the eigenvalues of $-\Delta_{\mathring g}$ on $\Lambda$.
    Therefore, $L-\ep D$ is a small perturbation of $L$, and we conclude that
    it is invertible for small $\ep$ after a simple fixed-point argument:
    Write $\Phi+L^{-1}F+\Psi$, then $\Psi$ solves 
    \begin{equation}
        \left\{
            \begin{array}{cc}
                \Psi = \ep L^{-1}D(L^{-1}F+\Psi)& \mbox{ in }T_1(\Lambda),\\
                \Psi=0 & \mbox{ on }\partial T_1(\Lambda).
            \end{array}
            \right.
    \end{equation}
    Using the fact that the norm of the inverse of $L$ is controlled by
    $1/\mu_1$, it is straightforward to show that the right-hand operator (to
    which we associate the $||\cdot||_\infty$ norm in the tube) is a
    contraction mapping for small $\ep$ that maps the space of bounded
    functions in the tube into itself. Thus $\Psi$ exists, allowing to conclude.
\end{proof}

\subsection{Proof of theorem \ref{thm:unique-stable-sol}}

\textit{Proof:} Write a perturbation of the solution $u=u_\ep+v$. Problem
    \ref{eq:small-gelfand} reduces to finding $v$ such that
    \begin{equation}
        \label{eq-fixed-point-H}
        \left\{
            \begin{array}{cc}
                \Delta (u_\ep+v)+\frac{\lambda}{\ep^2}e^{u_\ep+v}=0& \mbox{ in }T_\ep(\Lambda),\\
                v=0 & \mbox{ on }\partial T_\ep(\Lambda).
            \end{array}
            \right.
    \end{equation}
    Rewrite the differential equation as $(\ep^2\Delta + \lambda
    e^{u_\ep})v+(\ep^2\Delta u_\ep + \lambda e^{u_\ep})+\lambda
    e^{u_\ep}(e^v-1-v)=0$, and, recalling that $L_\ep$ is invertible for
    small $\ep$, define the following operator
    \begin{equation}
        H_\ep:= v\mapsto L_\ep^{-1}((\ep^2\Delta u_\ep + \lambda
        e^{u_\ep})+\lambda e^{u_\ep}(e^v-1-v)).
    \end{equation}
    Thus, any solution of \ref{eq-fixed-point-H}  is a fixed point of $H_\ep$. Let us introduce the space of functions
    $$\AAA_\ep :=\{v\in L^\infty(T_\ep(\Lambda)),\exists\, C \in \RR, ||v||\leq
    C\ep\},$$
    to which we associate the $L^\infty$ norm in the tube.
    % {\color{red}The space is not well defined because of the $\exists C$.}
    The following lemmas show that $H_\ep$ is a contraction mapping in
    $\AAA_\ep$. First, the fact that $H_\ep(\AAA_\ep)\subset \AAA_\ep$ for
    sufficiently small $\ep$ is a direct consequence of Lemma \ref{lem-phi} for
    a function in $\AAA_\ep$. 
    \begin{lemma}
        Let $v$ verify \ref{eq-fixed-point-H} and define $B=\max(1,2||\lambda
        e^U W||_{L^\infty(B^{n-1})})$. If $|v|\leq 1/B$, then $|v|\leq C\ep
        w_\ep$, where $C$ is a constant close to the constant from Lemma
        \ref{lem-bound_Cepsilon}.
    \end{lemma}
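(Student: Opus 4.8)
The plan is to linearise equation \ref{eq-fixed-point-H} about $u_\ep$ and feed the resulting identity into the a priori bound of Lemma~\ref{lem-phi}, using the hypothesis $|v|\le 1/B$ together with the precise value of $B$ to dominate the quadratic remainder. As observed just before the statement, $v$ solves $L_\ep v = g$ in $T_\ep(\Lambda)$ with $v=0$ on $\partial T_\ep(\Lambda)$, where
\begin{equation}
g:=\big(\ep^2\Delta u_\ep+\lambda e^{u_\ep}\big)+\lambda e^{u_\ep}\big(e^{v}-1-v\big).
\end{equation}
Lemma~\ref{lem-bound_Cepsilon} bounds the first summand by $C_0\ep$, where $C_0$ denotes the constant $C$ of that lemma. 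For the second, Taylor's formula gives $|e^{v}-1-v|\le (e-2)\,v^2$ whenever $|v|\le 1$, hence, since $|v|\le 1/B\le 1$, the pointwise bound $\lambda e^{u_\ep}|e^{v}-1-v|\le \tfrac{e-2}{B}\,\lambda e^{u_\ep}\,|v|$.

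I would then apply Lemma~\ref{lem-phi} to both $v$ and $-v$ (recall $w_\ep\ge 0$), obtaining $|v|\le 2\|g\|\,w_\ep$ pointwise in the tube. Substituting this into the previous inequality, and noting that in Fermi coordinates $\lambda e^{u_\ep}w_\ep$ equals $(\lambda e^{U}W)\!\big(\operatorname{dist}(z,\Lambda)/\ep\big)$, hence is $\le \|\lambda e^{U}W\|_{L^\infty(B^{n-1})}$, gives
\begin{equation}
\big\|\lambda e^{u_\ep}(e^{v}-1-v)\big\|\le \frac{2(e-2)}{B}\,\|\lambda e^{U}W\|_{L^\infty(B^{n-1})}\,\|g\|\le (e-2)\,\|g\|,
\end{equation}
the last step being exactly why $B$ was chosen $\ge 2\|\lambda e^{U}W\|_{L^\infty(B^{n-1})}$. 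Since $e-2<1$, the estimate $\|g\|\le C_0\ep+(e-2)\|g\|$ can be absorbed into $\|g\|\le \tfrac{C_0}{3-e}\ep$, and feeding this back into $|v|\le 2\|g\|w_\ep$ yields $|v|\le \tfrac{2C_0}{3-e}\,\ep\,w_\ep$. (Replacing $e-2$ by the sharp quadratic constant $c=B^{2}(e^{1/B}-1-1/B)<1$ on $[-1/B,1/B]$ improves this to $C=2C_0/(1-c)$.)

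To reach a constant genuinely \emph{close} to $C_0$, I would bootstrap. Once $\|v\|\le C\ep\|W\|_{L^\infty(B^{n-1})}=O(\ep)$ is known, I redo the previous step bounding the remainder by $c\,\|v\|\,|v|$ instead of $c\,|v|/B$; the coefficient multiplying $\|g\|$ then has size $O(\ep)$, so $\|g\|=C_0\ep\,(1+O(\ep))$. Combining this with the sharp form of the comparison estimate — the proof of \ref{eq-lemma--1/2} actually delivers $L_\ep w_\ep\ge 1-C'\ep$, hence $|v|\le (1-C'\ep)^{-1}\|g\|\,w_\ep$ in Lemma~\ref{lem-phi} — gives $|v|\le (C_0+O(\ep))\,\ep\,w_\ep$, which is the assertion for $\ep$ small.

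The one genuinely delicate point, and the step I expect to demand the most care, is the absorption: the remainder $\lambda e^{u_\ep}(e^{v}-1-v)$ is quadratic in $v$, whereas the barrier produced by Lemma~\ref{lem-phi} controls $v$ only linearly, so a fixed a priori bound on $|v|$ is indispensable — the hypothesis $|v|\le 1/B$ with this particular $B$ is precisely what forces the feedback coefficient strictly below $1$. With a larger a priori bound the quadratic term could dominate $w_\ep$ and the estimate would fail to close, which is also why the lemma must be stated for solutions already known to satisfy $|v|\le 1/B$.
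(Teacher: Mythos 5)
Your proposal is correct and rests on the same two pillars as the paper's proof: apply the barrier estimate of Lemma~\ref{lem-phi} to $v$ with right-hand side $f=(\ep^2\Delta u_\ep+\lambda e^{u_\ep})+\lambda e^{u_\ep}(e^v-1-v)$, bound the first summand by Lemma~\ref{lem-bound_Cepsilon}, and use the hypothesis $|v|\leq 1/B$ to tame the quadratic remainder. Where you diverge is in how the self-referential inequality is closed. The paper stays pointwise in $|v|$, arrives at $|v|\leq C\ep w_\ep+Bv^2$, and factors the resulting quadratic, which implicitly requires excluding the branch near the \emph{upper} root $\tfrac1B-C\ep w_\ep+O(\ep^2)$; since that root lies strictly below $1/B$, the hypothesis $|v|\leq 1/B$ alone does not rule it out and one needs an extra continuity argument ($v$ vanishes on $\partial T_\ep$, so $|v|$ cannot jump across the forbidden interval between the two roots). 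Your linear absorption at the level of $\|f\|$ --- namely $\|f\|\leq C_0\ep+(e-2)\|f\|$, made possible by converting $v^2\leq |v|/B$ and then $|v|\leq 2\|f\|w_\ep$ --- avoids this issue entirely and is, in that respect, tighter than the paper's argument. Your bootstrap to recover a constant of the form $C_0+O(\ep)$ (rather than $2C_0/(3-e)$) also addresses the ``close to the constant of Lemma~\ref{lem-bound_Cepsilon}'' claim more honestly than the paper does, including the observation that the factor $2$ in Lemma~\ref{lem-phi} must be sharpened to $(1-C'\ep)^{-1}$ for the claim to hold literally; the paper silently absorbs that factor into its ``$C$''. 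Both arguments are sound for small $\ep$; yours is the cleaner of the two.
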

    \begin{proof} Lemma \ref{lem-phi}, applied for $v$ and $f=(\ep^2\Delta
        u_\ep+\lambda e^{u_\ep})+ \lambda e^{u_\ep}(e^v-1-v)$ gives
    \begin{eqnarray}
        v &\leq & 2 ||(\ep^2\Delta u_\ep + \lambda e^{u_\ep})+\lambda
        e^{u_\ep}(e^v-1-v)||w_\ep\\
          &\leq & C\ep w_\ep + B(e^v-1-v),
    \end{eqnarray}
    for a constant $C$ from Lemma \ref{lem-bound_Cepsilon} not depending on
    $\ep$. Using the fact that $e^t-1-t<t^2$ for $t\in[-1,1]$ (the first
    non-zero root of $g(t):=t^2-(e^t-1-t)$ is greater than $\ln(3)$), we
    conclude that, as $|v|\leq 1$, $|v|\leq C\ep w_\ep+Bv^2$. Thus,
    \begin{equation}
        \big(|v|-C\ep w_\ep+O(\ep^2)\big)\cdot\big( |v| -\frac{1}{B}-C\ep
        w_\ep+O(\ep^2)\big)\geq 0,
    \end{equation}
    from where the result follows. 
\end{proof}

\begin{lemma}
    $H_\ep$ is a contraction mapping of $\AAA_\ep$.
\end{lemma}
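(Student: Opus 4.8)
The plan is to recognise this as a Banach fixed point statement and supply the one missing ingredient, the contraction estimate, since the invariance $H_\ep(\AAA_\ep)\subset\AAA_\ep$ (on the ball $\{v:\|v\|\le C_0\ep\}$, where $C_0$ is the constant furnished by Lemma~\ref{lem-phi}) has already been recorded above. First I would exploit the algebraic structure of $H_\ep$: the affine term $\ep^2\Delta u_\ep+\lambda e^{u_\ep}$ does not depend on $v$, so for $v_1,v_2$ with $\|v_i\|\le C_0\ep$,
\[
H_\ep(v_1)-H_\ep(v_2)=L_\ep^{-1}\Big(\lambda e^{u_\ep}\big[(e^{v_1}-v_1)-(e^{v_2}-v_2)\big]\Big),
\]
which splits the estimate into an operator bound for $L_\ep^{-1}$ and a Lipschitz bound for $g(t):=e^t-t$ on a small interval.

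For the operator bound, the a priori estimate of Lemma~\ref{lem-phi} gives $\phi\le 2\|f\|\,w_\ep$ for $\phi=L_\ep^{-1}f$; applying it to $-f$ as well yields $|\phi|\le 2\|f\|\,w_\ep$, hence $\|L_\ep^{-1}f\|\le 2\|w_\ep\|\,\|f\|=2W(0)\,\|f\|$, a bound independent of $\ep$ because $\|w_\ep\|=\|W\|_{L^\infty(B^{n-1})}=W(0)$. For the Lipschitz bound, on $[-C_0\ep,C_0\ep]$ one has $|g'(t)|=|e^t-1|\le e^{C_0\ep}-1\le 2C_0\ep$ once $\ep$ is small, so $\|(e^{v_1}-v_1)-(e^{v_2}-v_2)\|\le 2C_0\ep\,\|v_1-v_2\|$. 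Combining these, together with $\|e^{u_\ep}\|\le e^{\|U\|_{L^\infty(B^{n-1})}}=e^{U(0)}$, gives
\[
\|H_\ep(v_1)-H_\ep(v_2)\|\le\big(4\,\lambda\,W(0)\,e^{U(0)}\,C_0\big)\,\ep\,\|v_1-v_2\|=:K\ep\,\|v_1-v_2\|,
\]
with $K$ independent of $\ep$. Choosing $\ep$ small enough that $K\ep<1$ (and small enough that all earlier lemmas apply) makes $H_\ep$ a contraction of $\AAA_\ep$; the Banach fixed point theorem then produces a unique $v\in\AAA_\ep$ with $H_\ep(v)=v$, and $u_\ep+v$ is the sought solution of \ref{eq:small-gelfand}.

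The step I expect to require the most care is not conceptually deep but bookkeeping-heavy: carrying the constant $C_0$ from the invariance statement consistently into the linearisation of $g$ (the radius of the interval on which we linearise $e^t-t$ is exactly $\max_i\|v_i\|\le C_0\ep$), remembering that Lemma~\ref{lem-phi} is one-sided so it must be invoked for both $f$ and $-f$, and finally fixing the single parameter $\ep$ so that invertibility of $L_\ep$, the super-solution bound \ref{eq-lemma--1/2}, the inclusion $H_\ep(\AAA_\ep)\subset\AAA_\ep$, and $K\ep<1$ all hold simultaneously. Everything else is routine, and the factor $\ep$ in the contraction constant — which comes precisely from the smallness of the perturbations in $\AAA_\ep$ — is what also gives the uniqueness of the small solution claimed in Theorem~\ref{thm:unique-stable-sol}.
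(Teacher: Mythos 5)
Your proof is correct and follows essentially the same route as the paper: the affine term cancels in the difference, $L_\ep^{-1}$ is bounded uniformly in $\ep$ via the barrier estimate of Lemma~\ref{lem-phi}, and the nonlinearity $e^t-t$ contributes a Lipschitz factor of order $\ep$ because the arguments live in a ball of radius $O(\ep)$, yielding a contraction constant $K\ep<1$. If anything, your version is slightly more careful than the paper's (which writes $\|H_\ep(f-g)\|$ for $\|H_\ep(f)-H_\ep(g)\|$ and leaves the two-sided application of Lemma~\ref{lem-phi} implicit), so no changes are needed.
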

\begin{proof}
    Take $f,g\in \AAA_\ep$, then,
    $$||H_\ep(f-g)||=||L_\ep^{-1}\big(\lambda e^{u_\ep}(e^f-e^g -
    (f-g))\big)||.  $$
    The convexity of the exponential function implies that for any reals
    $\alpha,\beta$ with $\alpha>\beta$ we have $e^{\alpha}-e^{\beta}\leq
    e^{\alpha}(\alpha-\beta)$, and, taking $\beta=0$, $e^\alpha-1\leq \alpha e^\alpha$, therefore,
    \begin{eqnarray*}
        ||H_\ep(f-g)||&\leq& C\lambda ||e^{u_\ep}||\cdot ||e^{\max(f,g)}-1||\cdot
    ||f-g||\\
                      &\leq& C'\ep||f-g||,
    \end{eqnarray*}
    for a constant $C'$ not depending on $\ep$.
\end{proof}
This proves that there exists a
    unique genuine solution of $\ref{eq:small-gelfand}$ of the form $u_\ep+v$
    with $v$ small. The stability of this solution comes from the fact that the
    spectrum of the operators $-(\Delta+\lambda e^U)$ in $T_1(\Omega)$ and
    $-(\Delta + \lambda e^{u_\ep}$ in $T_\ep(\Omega)$ are equal: We have
    $$-(\Delta + \lambda e^{u_\ep+v})=-(\Delta + \lambda e^{u_\ep})+O(\ep)
    \mbox{ in } T_\ep(\Omega),$$
    and therefore the eigenvalues of the left-hand operator are close to the
    sequence $\mu_i$, which does not depend on $\ep$. In particular, they form
    a sequence of positive values for small $\ep$, which implies precisely that
    the associated quadratic form is positive definite. This allows to conclude
    that, for sufficiently small $\ep$, there exists a unique small perturbation
    that leads to a genuine stable solution of \ref{eq:small-gelfand}. This
    uniqueness holds only in a neighbourhood of $u_\ep$; it remains the
    question of whether there are stable solutions far from $u_\ep$. We answer
    negatively with the following argument: Suppose that there are two distinct
    stable solutions of \ref{eq:small-gelfand}. Their difference $v:=u_2-u_1$
    verifies
    $$-\ep^2 \Delta v = \lambda(e^{u_2}-e^{u_1}).$$
    Multiplying the above equation by the positive part of $v$ and integrating
    in the tube gives
    \begin{equation}
        \label{eq:unique-stable}
        \ep^2\int_{T_\ep}|\nabla v_+|^2\,dx = \lambda
    \int_{T_\ep}(e^{u_2}-e^{u_1})\cdot v_+\, dx.
\end{equation}
    As $u_2$ is stable, we have $\ep^2\int_{T_\ep}|\nabla v_+|^2\geq \lambda
    \int_{T_\ep} e^{u_2}\cdot v_+^2\, dx$. Plugging this inequality into
    \ref{eq:unique-stable} yields
    $$
    0 \leq \lambda \int_{T_\ep}(e^{u_2}-e^{u_1}-e^{u_2}v_+)v_+\, dx.
    $$
    Note that the integrand is a negative number by strict convexity of the
    exponential function, therefore, $v_+=0$. Changing $u_1$ and $u_2$ gives
    $v_-=0$, completing the proof of Theorem \ref{thm:unique-stable-sol}.
    \hfill $\blacksquare$

\section{Concluding remarks}

In this article, we adapted the arguments of
\cite{Pacard2014,stable-solutions-elliptic} to the Gelfand problem $\Delta u +
\lambda e^u=0$ with Dirichlet boundary conditions in a small geodesic tube
around a smooth manifold embedded in $\RR^n$.
% As already stated in \cite{Pacard2014},
We believe that the same line of thinking in \cite{Pacard2014} can be
carried out without further complications here to complete the bifurcation diagram.
This is, to show that the unstable
solutions of the Gelfand problem in the unit ball can also be used to construct
unstable solutions in tubular domains, for $\ep$ possibly outside a set of
resonant values in which the linearized operator $-\Delta - \lambda e^{u_\ep}$ 
is not invertible, where $u_\ep$ is the rescaling of an unstable solution. Moreover,
an analogous spectral analysis of the linearized operator should yield the
Morse index of unstable solutions. This way, we expect the same resonance phenomena,
\ie, that the invertibility of linearized operators holds only for values of $\ep$
in a set accumulating around 0 (but not for every small $\ep>0$).

\medskip 

\textit{Acknowledgements: During this work, the author was guest
    researcher at the Facultad de Matem\'aticas de la Pontificia Universidad
    Cat\'olica de Chile. The author thanks Monica Musso and Frank Pacard.}

\bibliographystyle{alpha}
%\nocite{*}
\bibliography{bibliography/biblio}

% \appendix 

\end{document}